\def\thtext#1{
  \catcode`@=11
  \gdef\@thmcountersep{. #1}
  \catcode`@=12
}
\def\threst{
  \catcode`@=11
  \gdef\@thmcountersep{.}
  \catcode`@=12
}
\theoremstyle{plain}
\newtheorem{thm}{Theorem}[section]
\newtheorem{prop}[thm]{Proposition}
\newtheorem{cor}[thm]{Corollary}
\newtheorem{ass}[thm]{Assertion}
\newtheorem{lem}[thm]{Lemma}
\theoremstyle{definition}
\newtheorem{dfn}[thm]{Definition}
\newtheorem{rk}[thm]{Remark}
\newtheorem{examp}[thm]{Example}
 \def\.{.\spacefactor\@m}
\def\R{\mathbb R}
\def\e{\varepsilon}
\def\dl{\delta}
\def\D{\Delta}
\def\g{\gamma}
\def\G{\Gamma}
\def\l{\lambda}
\def\r{\rho}
\def\s{\sigma}
\def\v{\varphi}
\def\0{\emptyset}
\def\:{\colon}
\def\<{\langle}
\def\>{\rangle}
\def\[{\llbracket}
\def\]{\rrbracket}
\def\rom#1{\emph{#1}}
\def\({\rom(}
\def\){\rom)}
\def\sm{\setminus}
\def\ss{\subset}
\def\sp{\supset}
\def\x{\times}
\def\bA{{\bar A}}
\def\cut{\operatorname{CUT}}
\def\diam{\operatorname{diam}}
\def\dis{\operatorname{dis}}
\def\NP{\operatorname{NP}}
\def\cC{{\cal C}}
\def\cG{{\cal G}}
\def\cH{{\cal H}}
\def\cM{{\cal M}}
\def\cP{{\cal P}}
\def\cR{{\cal R}}
\def\cU{{\cal U}}
\begin{document}
\title{Embeddings to Rectilinear Space and \\
Gromov--Hausdorff Distances}
\author{A.\,O.~Ivanov, A.\,A.~Tuzhilin}
\date{}
\maketitle

\begin{abstract}
We show that the problem whether a given finite metric space can be embedded into $m$-dimensional rectilinear space can be reformulated in terms of the Gromov--Hausdorff distance between some special finite metric spaces.

{\bf MSC~2020:} 46B85, 51F99, 53C23.

{\bf Keywords:} Gromov--Hausdorff distance, Rectilinear space, Isometric embeddings, Finite metric spaces, normed spaces
\end{abstract}

\section{Introduction}
\markright{\thesection.~Introduction}
We continue our investigations~\cite{TusMST}, \cite{ITBors}, \cite{ITBorsFom}, \cite{ITBUSp} on connections between the geometry of Gromov--Hausdorff distance and Discrete Geometry problems such as calculation of edges weights of a minimal spanning tree, Borsuk conjecture, chromatic number and clique cover number calculation for a simple graph, etc. In this paper we give an answer to a question whether a given finite metric space $(X,d)$ can be embedded into $m$-dimensional rectilinear space in terms of the Gromov--Hausdorff distance.

\subsubsection*{Hyperspaces}
For subsets $A$ and $B$ of a fixed metric space $X$, a natural distance function $d_H$ was defined by F.~Hausdorff~\cite{Hausdorff} as the infimum of positive numbers $r$ such that $A$ is contained in the $r$-neighborhood of $B$, and vice-versa. It is well-known that this function, referred as the \emph{Hausdorff distance}, is a metric on the family of all closed bounded subsets of the metric space $X$, see for example~\cite{BurBurIva}. The Hausdorff distance was generalized to the case of two metric spaces $X$ and $Y$ by D.~Edwards~\cite{Edwards} and independently by M.~Gromov~\cite{Gromov}. The \emph{Gromov--Hausdorff distance\/} between the spaces $X$ and $Y$ equals the infimum of the values $d_H\big(\v(X),\psi(Y)\big)$ over all possible isometrical embeddings $\v\:X\to Z$ and $\psi\:Y\to Z$ into all possible metric spaces $Z$.  It is well-known that this distance function is a metric on the family of isometry classes of non-empty compact metric spaces. The corresponding hyperspace is usually denoted by $\cM$ and is referred as the \emph{Gromov--Hausdorff space\/} (see~\cite{BurBurIva}).

The geometry of the Gromov--Hausdorff space is rather tricky and is intensively investigated by many authors, see, for example, a review in~\cite{BurBurIva}. The technique of optimal correspondences permitted to prove that the space $\cM$ is geodesic~\cite{IvaNikolaevaTuz, ChowdhuryMemoli, IvaIlTuzRealiz} and to describe some local and all global isometries of $\cM$, see~\cite{IvaTuzLocalStrIsom, ITautoGH}. Since finite metric spaces form an everywhere dense subset of $\cM$, the distances to such spaces and between such spaces play an important role in the research of geometry of $\cM$. Important classes of such spaces are formed by the ones all whose non-zero distances are the same (so-called \emph{single-distance spaces\/} or \emph{simplexes\/}) and by the spaces whose non-zero distances take only two different values (so-called \emph{two-distance spaces\/}). The authors, together with S.~Iliadis and D.~Grigor'ev, see~\cite{IvaTuzSimpDist8}, \cite{GrigIvaTuzSimpDist}, calculated distances from a bounded metric space to a simplex, and, as a particular case, the distances between any simplex and any $2$-distance space, see~\cite{IvaTuzSimpTwoDist}. It turns out that the Gromov--Hausdorff distance from a metric space $X$ to a simplex ``feels'' somehow a geometry of partitions of the space $X$. The latter explains some relations between the Gromov--Hausdorff distance and Discrete Geometry Problems.

\subsubsection*{Discrete Geometry Problems}
In 1933, K.~Borsuk stated the following question: How many parts one needs to partition an arbitrary subset of the Euclidean space into, to obtain pieces of smaller diameters? He made the following famous conjecture: Any bounded non-single-point subset of $\R^n$ can be partitioned into at most $n+1$ subsets, each of which has smaller diameter than the initial subset. K.~Borsuk himself proved it for $n=2$ and for a ball in $3$-dimensional space, \cite{BorCong} and~\cite{Borsuk}. For $n=3$ the conjecture was proved by J.~Perkal (1947) and by H.\,G.~Eggleston (1955), then for convex subsets with smooth boundaries in 1946 by H.~Hadwiger~\cite{Hadw, Hadw2}, and for central symmetric bodies by A.\,S.~Riesling (1971). However, in 1993 the conjecture was disproved in general case  by J.~Kahn, and G.~Kalai, see~\cite{KahnKalai}. They constructed a counterexample in dimension $n=1325$, and also proved that the conjecture is not valid for all $n>2014$.  This estimate was consistently improved by Raigorodskii, $n\ge561$, Hinrichs and Richter, $n\ge298$, Bondarenko, $n\ge 65$, and Jenrich, $n\ge 64$, see details in a review~\cite{Raig}. Notice that all the examples are finite subsets of the corresponding spaces, and the best known results of Bondarenko~\cite{Bond} and Jenrich~\cite{Jenr} are the $2$-distance subsets of the unit sphere.

On the other hand, Lusternik and Schnirelmann~\cite{LustSch}, and a bit later independently Borsuk~\cite{BorCong} and~\cite{Borsuk}, see also~\cite{Zieg}, have shown that the standard sphere and the standard ball in $\R^n$, $n\ge2$, cannot be partitioned into  $m\le n$ subsets having smaller diameters. Thus, the least possible number of parts of smaller diameter, necessary to partition the sphere and the ball in $\R^n$ equals $n+1$.

In paper~\cite{ITBorsFom} we stated a generalized Borsuk problem, passing to an arbitrary bounded metric space  $X$ and its partitions of an arbitrary cardinality $m$ (not necessary finite), and gave a criterion solving the Borsuk problem in terms of the Gromov--Hausdorff distance. Namely, it is shown that to verify the existence of an $m$-partition into subsets of smaller diameter, it suffices to calculate the Gromov--Hausdorff distance from the space $X$ to a simplex having the cardinality $m$ and a smaller diameter than $X$. As a corollary, a solution to the Borsuk problem for a $2$-distance space $X$ with distances $a<b$ is obtained in terms of the clique cover number of the simple graph $G$ with vertex set $X$, whose vertices $x$ and $y$ are connected by an edge iff $|xy|=a$.

Recall that a \emph{clique cover\/}  of a given simple graph is a cover of the vertex set of the graph by subsets such that every their two vertices are adjacent. Each such subset is called a \emph{clique\/} and is a vertex set of a complete subgraph that is also referred as a  \emph{clique}. The minimum $k$ for which a $k$-element clique cover exists is called the \emph{clique cover number\/} of the given graph. Further, a \emph{graph coloring\/} is an assignment of labels (that are traditionally called ``colors'') to vertices of a graph in such a way that no two adjacent vertices are of the same color. The smallest number of colors necessary to color a graph is called its \emph{chromatic number}. It is not difficult to see  that the clique cover can be considered as a graph coloring of the complement  graph, hence the clique cover number of a graph equals to the chromatic number of the complement one. Calculation and estimation of these numbers are very hard combinatorial problems, see a review in~\cite{Lewis}. In~\cite{ITBorsFom} we calculated the clique cover number of a simple graph and  the chromatic number of a simple graph in terms of the Gromov--Hausdorff distance from an appropriate simplex to the $2$-distance spaces constructed by the graph.

\subsubsection*{Isometrical Embeddings and Main Result}
Another classical problem we deal with is the isometric embedding problem, which consists of asking whether a given metric space can be isometrically embedded into some prescribed ambient space. The most studied case is when the ambient space is Euclidean. This investigations were initiated by Cayley in~1841, see~\cite{Cayley}, and later continued by Menger~\cite{Menger}, Sch\"onberg~\cite{Schon}, Blumenthal~\cite{Blum}, and other specialists. One of Sch\"oenberg’s well-known results asserts that a metric space $(X,d)$ is isometrically embeddable in Hilbert space if and only if the squared distance $d^2$ satisfies a list of linear inequalities. These results were later extended in the context of the Banach $L_p$- and $\ell_p$-spaces. Of particular importance for our purpose is a result by Bretagnolle, Dacunha Castelle and Krivine~\cite{BDCK} stating that $(X, d)$ is isometrically $L_p$-embeddable if and only if the same holds for every finite subspace of $(X, d)$. The same is true for the $\ell_p$-embeddability, see~\cite{DezaLaurent}.

Embeddability into other metric spaces is also interesting and investigated intensively. In 1925, P.~Urysohn~\cite{Ury} found a separable complete metric space $\cU$ containing an arbitrary finite metric space in such ``symmetric'' way that any isometry between finite subsets of $\cU$ can be extended to isometry of the whole $\cU$. Now $\cU$ is known as the universal Urysohn space. In 1935, K.~Kuratowski~\cite{Kurat} constructed an isometric embedding of a metric space $X$ into the Banach space $C_b(X)$ of bounded continuous functions on $X$ endowed with $\sup$-metric. In the case of separable metric spaces the same constriction can be used to get an embedding into the Banach space $\ell_\infty(X)$ of all bounded functions on $X$ with the $\sup$-metric. Another famous result that should be mentioned is J.~Nash Theorem~\cite{Nash} that guaranties embeddability of any Riemannian manifold into an appropriate Euclidean space. Recently the authors in collaboration with S.~Iliadis, see~\cite{IvaTuzLocalStrIsom} and~\cite{ITEmb}, constructed  embeddings of an arbitrary finite metric space in the Gromov--Hausdorff space $\cM$ and of any bounded metric space into the Gromov--Hausdorff metric class (see details in~\cite{ITEmb}).

In this paper we are interested in the case of isometrical embeddings in a rectilinear space. Due to just mentioned results, it suffices to consider finite metric spaces $(X,d)$, and embeddability of a finite pseudometric space into an $\ell_1$- as into an $L_1$-space is equivalent to the distance $d$ belonging to the so-called cut cone, see definition below. This important result was found firstly by Assouad, see~\cite{Assouad} and~\cite{AssouadDeza}. On one hand, the cut cone is defined by a finite system of  linear inequalities, but in contrast with Euclidean case, not all inequalities from this system are known, in general. So, other approaches have great importance. In 1998 H.-J.~Bandelt, V.~Chepoi, and M.~Laurent~\cite{BCL} found out that the question of embeddability of a finite space $(X,d)$ into $m$-dimensional rectilinear space $\R^m_1$ can be reformulated in terms of colorability of certain hypergraph associated with the space $(X,d)$.

We reduce the colorability of the hypergraph to colorability of several simple graphs and then, using our results on the relations between coloring number of a simple graph and Gromov--Hausdorff distance described above, reduce the question about embeddability of a finite space $(X,d)$ into $m$-dimensional rectilinear space to calculation of the Gromov--Hausdorff distance from special $2$-distant spaces associated with $(X,d)$ to one-distant spaces, see Main Theorem.

\section{Preliminaries}
\markright{\thesection.~Preliminaries}

Let $X$ be an arbitrary nonempty set. Recall that a function on $d\:X\x X\to\R$ is called a \emph{metric\/} if it is non-negative, non-degenerate, symmetric, and satisfies the triangle inequality. A set with a metric is called a \emph{metric space}.  If such a function $d$ is permitted to take infinite values, then we call $d$ \emph{a generalized metric}. If we omit the non-degeneracy condition, i.e., permit $d(x,y)=0$ for some distinct $x$ and $y$, then we change the term ``metric'' to \emph{pseudometric}. If $\r$ is only non-negative, symmetric, and $\r(x,x)=0$ for any $x\in X$, then we call such $d$ \emph{a distance function}, instead of metric or pseudometric. As a rule, if it is not ambiguous, we write $|xy|$ for $d(x,y)$.

In what follows all metric spaces are endowed with the corresponding metric topology. We also use the following notations. By $\# X$ we denote the cardinality of a set $X$. Let $X$ be a metric space. The closure of a subset  $A\ss X$  is denoted by $\bA$. For an arbitrary nonempty subset $A\ss X$ and point $x\in X$ put $|xA|=|Ax|=\inf\big\{|ax|: a\in A\big\}$. Further, for $r\ge 0$ put
$$
B_r(x)=\big\{y\in X: |xy|\le r\big\}, \quad\text{and}\quad U_r(x)=\big\{y\in X: |xy|< r\big\},
$$
and
$$
B_r(A)=\big\{y\in X: |Ay|\le r\big\}, \quad\text{and}\quad U_r(A)=\big\{y\in X: |Ay|< r\big\}.
$$

\subsection{Hausdorff distance}
Recall the basic results concerning the Hausdorff distance. The details can be found in~\cite{BurBurIva}. For a set $X$, by $\cP_0(X)$ we denote the collection of all nonempty subsets of $X$. Let $X$ be a metric space. For any $A,B\in\cP_0(X)$ we put
$$
d_H(A,B)=\inf\bigl\{r\in[0,\infty]:A\ss B_r(B)\ \text{and}\ B_r(A)\sp B\bigr\}.
$$
It is easy to see that $d_H$ is non-negative, symmetric, and $d_H(A,A)=0$ for any nonempty $A\ss X$, thus, $d_H$ is a generalized distance on the family $\cP_0(X)$ of all nonempty subsets of a metric space $X$, moreover, it is a generalized pseudometric on $\cP_0(X)$, i.e., it satisfies the triangle inequality. The function $d_H$ is referred as \emph{Hausdorff distance}.

Further, by $\cH(X)\ss\cP_0(X)$ we denote the set of all nonempty closed bounded subsets of a metric space $X$.  It is well-known that the Hausdorff distance $d_H$ is a metric on $\cH(X)$.

\subsection{Gromov--Hausdorff distance}
Let $X$ and $Y$ be metric spaces. A triple $(X',Y',Z)$ consisting of a metric space $Z$ and its two subsets $X'$ and $Y'$ which are isometric respectively to $X$ and $Y$ is be called a \emph{realization of the pair $(X,Y)$}. Put
$$
d_{GH}(X,Y)=\inf\bigl\{r\in\R:\text{there exists realization $(X',Y',Z)$ of $(X,Y)$ with $d_H(X',Y')\le r$}\bigr\}.
$$
The value $d_{GH}(X,Y)$ is evidently non-negative, symmetric, and $d_{GH}(X,X)=0$ for any metric space $X$. Thus, $d_{GH}$ is a generalized distance function on each set of metric spaces.

\begin{dfn}
The value $d_{GH}(X,Y)$ is called \emph{the Gromov--Hausdorff distance\/} between the metric spaces $X$ and $Y$.
\end{dfn}

It is well-known that the function $d_{GH}$ is a generalized pseudometric on every set of metric spaces. If the diameters of all spaces in the family are bounded by the same number, then $d_{GH}$ is a pseudometric. In general, $d_{GH}$ is not a metric, it may equal zero for distinct metric spaces. However, if we restrict ourselves to compact metric spaces considered up to an isometry, then $d_{GH}$ is a metric.

For specific calculations of the Gromov--Hausdorff distance, other equivalent definitions of this distance are useful.

Recall that \emph{a relation\/} between sets $X$ and $Y$ is defined as a subset of the Cartesian product $X\x Y$. Similarly to the case of mappings, for each $\s\in\cP_0(X\x Y)$ and for every $x\in X$ and $y\in Y$, there are defined the \emph{image\/} $\s(x):=\bigl\{y\in Y:(x,y)\in\s\bigr\}$ of any $x\in X$ and the \emph{pre-image\/} $\s^{-1}(y)=\bigl\{x\in X:(x,y)\in\s\bigr\}$ of any $y\in Y$. Also, for $A\ss X$ and $B\ss Y$ their \emph{image\/} and \emph{pre-image\/} are defined as the union of the images and pre-images of their elements, respectively.

A relation $R$ between $X$ and $Y$ is called a \emph{correspondence\/} if every $x\in X$ has a non-empty image, and every $y\in Y$ has a non-empty pre-image. Thus, the correspondence can be considered as a surjective multivalued mapping. By $\cR(X,Y)$ we denote the set of all correspondences between $X$ and $Y$.

If $X$ and $Y$ are metric spaces, then for each relation $\s\in\cP_0(X\x Y)$ its \emph{distortion $\dis\s$} as follows
$$
\dis\s=\sup\Bigl\{\bigl||xx'|-|yy'|\bigr|:(x,y),\,(x',y')\in\s\Bigr\}.
$$

The key well-known result on the relation between the correspondences and the Gromov--Hausdorff distance is the following equality
$$
d_{GH}(X,Y)=\frac12\inf\bigl\{\dis R:R\in\cR(X,Y)\bigr\},
$$
that is valid for arbitrary metric spaces $X$ and $Y$, see for example~\cite{BurBurIva}.

For arbitrary nonempty sets $X$ and $Y$, a correspondence $R\in\cR(X,Y)$ is called \emph{irreducible\/} if it is a minimal element of the set $\cR(X,Y)$ with respect to the order given by the inclusion relation. The set of all irreducible correspondences between $X$ and $Y$ is denoted by $\cR^0(X,Y)$. It can be shown~\cite{IvaTuz_corr} that for every $R\in\cR(X,Y)$ there exists $R^0\in\cR^0(X,Y)$ such that $R^0\ss R$. In particular, $\cR^0(X,Y)\ne\0$. Therefore, for any metric spaces $X$ and $Y$ the next equality holds
$$
d_{GH}(X,Y)=\frac12\inf\bigl\{\dis R\mid R\in\cR^0(X,Y)\bigr\}.
$$

Here we list several simple cases of exact calculation and estimate of the Gromov--Hausdorff distance.

\begin{examp}\label{examp:epsilon-net}
Let $Y$ be an arbitrary $\e$-net of a metric space $X$. Then $d_{GH}(X,Y)\le d_H(X,Y)\le\e$. Thus, every compact metric space is approximated (according to the Gromov-Hausdorff metric) with any accuracy by finite metric spaces.
\end{examp}

By $\D_1$ we denote a single-point metric space.

\begin{examp}\label{examp:GH_simple}
For any metric space $X$ we have
$$
d_{GH}(\D_1,X)=\frac12\diam X.
$$
\end{examp}

\begin{examp}\label{examp:dGHbelowEstimate}
Let $X$ and $Y$ be metric spaces, and the diameter of one of them is finite. Then
$$
d_{GH}(X,Y)\ge\frac12|\diam X-\diam Y|.
$$
\end{examp}

\begin{examp}\label{examp:GH-ineq-max-Diam-X-Y}
Let $X$ and $Y$ be some metric spaces, then
$$
d_{GH}(X,Y)\le\frac12\max\{\diam X,\diam Y\},
$$
in particular, if $X$ and $Y$ are bounded metric spaces, then $d_{GH}(X,Y)<\infty$.
\end{examp}

By \emph{simplex\/} we call a metric space, all whose non-zero distances equal to each other, i.e., a one-distance space. If $m$ is an arbitrary cardinal number, then by $\D_m$ we denote a simplex containing $m$ points and such that all its non-zero distances equal $1$. Thus, $\l\D_m$, $\l>0$, is a simplex whose non-zero distances equal $\l$. Also,  $0\, X$ coincides with $\D_1$ by definition. The following result holds, for a proof see~\cite{ITBorsFom}

\begin{prop}
Let $X$ be an arbitrary metric space, $m>\#X$ a cardinal number, and $\l\ge0$, then
$$
2d_{GH}(\l\D_m,X)=\max\{\l,\diam X-\l\}.
$$
\end{prop}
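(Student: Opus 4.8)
The plan is to establish the two inequalities $2d_{GH}(\l\D_m,X)\le\max\{\l,\diam X-\l\}$ and $2d_{GH}(\l\D_m,X)\ge\max\{\l,\diam X-\l\}$ separately, working throughout with the correspondence formula $2d_{GH}(\l\D_m,X)=\inf\bigl\{\dis R:R\in\cR(\l\D_m,X)\bigr\}$. First I would dispose of the trivial case $\l=0$: then $\l\D_m=\D_1$ by the stated convention, $\max\{0,\diam X-0\}=\diam X$ since $\diam X\ge0$, and the asserted equality is exactly Example~\ref{examp:GH_simple}. So from now on assume $\l>0$; then $\l\D_m$ genuinely has $m$ points, with $m\ge2$ because $m>\#X\ge1$, all pairwise distances equal to $\l$, in particular $\diam(\l\D_m)=\l$.

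For the upper bound I would exhibit one concrete correspondence. Since $m=\#(\l\D_m)>\#X$, there is a surjection $f\:\l\D_m\to X$; let $R$ be its graph, a correspondence between $\l\D_m$ and $X$. To estimate $\dis R$, take $(p,x),(p',x')\in R$, so that $x=f(p)$ and $x'=f(p')$. If $p=p'$ the contribution $\bigl||pp'|-|xx'|\bigr|$ is $0$; if $p\ne p'$ it equals $\bigl|\l-|f(p)f(p')|\bigr|$, and since $0\le|f(p)f(p')|\le\diam X$ and the function $t\mapsto|\l-t|$ is convex (hence attains its maximum on $[0,\diam X]$ at an endpoint), this is at most $\max\{\l,\,|\l-\diam X|\}=\max\{\l,\diam X-\l\}$, the last equality because $\l-\diam X\le\l$. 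Thus $\dis R\le\max\{\l,\diam X-\l\}$, which gives the upper bound.

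For the lower bound I would bound $\dis R$ from below for \emph{every} $R\in\cR(\l\D_m,X)$ in two ways. First, Example~\ref{examp:dGHbelowEstimate}, applicable since $\diam(\l\D_m)=\l<\infty$, gives $2d_{GH}(\l\D_m,X)\ge|\l-\diam X|\ge\diam X-\l$; this also settles the case $\diam X=\infty$, in which both sides of the asserted identity are $+\infty$. Second, a cardinality pigeonhole: the pre-images $R^{-1}(x)$, $x\in X$, are nonempty and cover $\l\D_m$, so if each were a single point their union would have cardinality at most $\#X<m$, a contradiction; hence some $R^{-1}(x)$ contains two distinct points $p\ne p'$, and then $\dis R\ge\bigl||pp'|-|xx|\bigr|=\l$. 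Taking the infimum over $R$ yields $2d_{GH}(\l\D_m,X)\ge\l$, and combined with the previous bound, $2d_{GH}(\l\D_m,X)\ge\max\{\l,\diam X-\l\}$; together with the upper bound this proves the proposition. I do not expect a genuine obstacle here; the only points needing a little care are that the pigeonhole stays valid for infinite cardinals (a union of at most $\#X$ singletons has cardinality at most $\#X$) and the elementary convexity estimate for $|\l-t|$ on $[0,\diam X]$.
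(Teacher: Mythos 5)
Your proof is correct. The paper itself gives no proof of this proposition---it defers to \cite{ITBorsFom}---but your argument (the graph of a surjection $\l\D_m\to X$ as an explicit correspondence for the upper bound; the diameter estimate of Example~\ref{examp:dGHbelowEstimate} together with a cardinality pigeonhole forcing two simplex points over a single point of $X$ for the lower bound) is exactly the standard argument used there, and you handle the edge cases $\l=0$, $\diam X=\infty$, and infinite cardinals correctly.
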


The case $2\le m\le \#X$ is rather more delicate, see details in~\cite{IvaTuzSimpDist8} and~\cite{GrigIvaTuzSimpDist}.

\subsection{Elements of Graph Theory}
By a \emph{graph\/} we mean a pair $G=(V,E)$ consisting of two sets $V$ and $E$ referred as the \emph{vertex set\/} and the \emph{edge set\/} of the graph $G$ , respectively; elements of $V$ are called \emph{vertices}, and the ones of $E$ are called \emph{edges\/} of the graph $G$. The set $E$ is a subset of the family of two-element subsets of $V$. If $V$ and $E$ are finite sets then the graph $G$ is called \emph{finite}.

It is convenient to use the following notations:
\begin{itemize}
\item If $\{v,w\}\in E$ is an edge of the graph $G$, then we write it just as $vw$ or $wv$; further one says that an edge $vw$ \emph{connects the vertices $v$ and $w$}, and that $v$ and $w$ are \emph{the vertices of the edge $vw$};
\item We write $V(G)$ and $E(G)$ for the vertex set and the edge set of a graph $G$ to underline which graph is under consideration.
\end{itemize}

Two vertices $v,w\in V(G)$ are called \emph{adjacent\/} if $vw\in E(G)$. Two different edges $e_1,e_2\in E(G)$ are called \emph{adjacent\/} if they have a common vertex, i.e., if $e_1\cap e_2\ne\0$. Each edge $vw\in E(V)$ and its vertex, i.e., $v$ or $w$, are said to be \emph{incident\/} to each other.  The cardinal number of edges incident to a vertex $v$ is called the \emph{degree of the vertex $v$} and is denoted by $\deg v$.

We also need some set-theoretical operations on graphs. They are usually defined in an intuitively clear way in terms of vertex and edge sets. For example, if $G=(V,E)$ is a graph, and $e$ is a two-element subset of $V$, then $G\cup e=\big(V,E\cup\{e\}\big)$; similarly for $e\in E$ put $G\sm e=\big(V,E\sm\{e\}\big)$.

The concept of hypergraph generalizes naturally the one of graph. Namely, a \emph{hypergraph $H=(V,E)$} is a pair, consisting of a \emph{vertex set} $V$ and an \emph{edge set $E$}, where $E$ is a family of nonempty subsets of $V$. As in the case of usual graphs, an element of $V$ is called a \emph{vertex}, and an element of $E$ is called an \emph{edge}.

\subsection{Generalized Borsuk Problem}\label{ssec:GBP}

Classical Borsuk Problem deals with partitions of subsets of Euclidean space into parts having smaller diameters. We generalize the Borsuk problem to arbitrary bounded metric spaces and partitions of arbitrary cardinality. Let $X$ be a bounded metric space, $m$ a cardinal number such that $2\le m\le\#X$, and $D=\{X_i\}_{i\in I}$ a partition of $X$ into $m$ nonempty subsets. We say that  $D$ is a partition of $X$ into subsets having \emph{strictly smaller diameters}, if there exists $\e>0$ such that $\diam X_i\le\diam X-\e$ for all $i\in I$.

The \emph{Generalized Borsuk Problem\/}: Is it possible to partition a bounded metric space $X$ into a given, probably infinite, number of subsets, each of which has a strictly smaller diameter than $X$?

In~\cite{ITBorsFom} a solution to this Problem in terms of the Gromov--Hausdorff distance is given.

\begin{thm}\label{thm:Borsuk}
Let $X$ be an arbitrary bounded metric space and $m$ a cardinal number such that $2\le m\le\#X$. Choose an arbitrary number $0<\l<\diam X$, then $X$ can be partitioned into $m$ subsets having strictly smaller diameters if and only if $2d_{GH}(\l\D_m,X)<\diam X$. If not, then $2d_{GH}(\l\D_m,X)=\diam X$.
\end{thm}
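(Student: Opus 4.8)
The plan is to reduce the assertion to a correspondence-distortion computation via the standard formula $d_{GH}(X,Y)=\tfrac12\inf\{\dis R:R\in\cR(X,Y)\}$, exploiting the very rigid structure of the simplex $\l\D_m$. First I would fix notation: write the points of $\l\D_m$ as $\{s_1,\dots,s_m\}$ with $|s_is_j|=\l$ for $i\ne j$, and set $D_X:=\diam X$. Since $2\le m\le\#X$, any correspondence $R\in\cR(\l\D_m,X)$ induces a cover $\{R(s_i)\}_{i=1}^m$ of $X$ by nonempty subsets; conversely any cover of $X$ by $m$ nonempty subsets $\{X_i\}$ arises this way (take $R=\bigcup_i\{s_i\}\x X_i$). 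Passing from covers to partitions is harmless for the diameter condition: refining a cover to a partition only shrinks the pieces, and conversely a partition is a cover. So the combinatorial content is: $X$ admits an $m$-element cover all of whose pieces have diameter $\le D_X-\e$ for some $\e>0$ iff it admits such a partition.

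Next I would compute $\dis R$ for $R=\bigcup_i\{s_i\}\x X_i$. The pairs $\bigl((s_i,x),(s_i,x')\bigr)$ with $x,x'\in X_i$ contribute $\bigl||s_is_i|-|xx'|\bigr|=|xx'|$, so their supremum is $\sup_i\diam X_i$. The pairs $\bigl((s_i,x),(s_j,x')\bigr)$ with $i\ne j$ contribute $\bigl|\l-|xx'|\bigr|$; as $x$ ranges over $X_i$ and $x'$ over $X_j$ (and every point of $X$ lies in some $X_i$), $|xx'|$ ranges over values in $[0,D_X]$, and since $m\ge 2$ one can realize $|xx'|$ arbitrarily close to $0$ (take $x=x'$ when a point lies in two pieces, or in a partition take $x,x'$ close) and up to $D_X$; hence this part of the distortion is $\max\{\l,D_X-\l\}$ — more precisely it is at least $\max\{\l, \sup\{|xx'|\}-\l\}$ where the sup is over cross-pairs, and at most $\max\{\l,D_X-\l\}$. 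Combining, for any such $R$,
$$
\dis R=\max\Bigl\{\sup_i\diam X_i,\ \l,\ \textstyle\sup_{\,i\ne j,\ x\in X_i,\ x'\in X_j}|xx'|-\l\Bigr\},
$$
and since every pair of distinct points of $X$ lies in some cross-pair for a partition into $\ge 2$ parts, the last sup equals $D_X$ when no piece exhausts a diametral pair — in general it is $\le D_X$, so $\dis R\le\max\{\sup_i\diam X_i,\ D_X-\l,\ \l\}$, with equality $\dis R=\max\{\l,D_X-\l\}$ whenever $\sup_i\diam X_i\le D_X-\l$. Taking the infimum over $R$ and using irreducible correspondences (Proposition-type reductions from the preliminaries) gives $2d_{GH}(\l\D_m,X)=\max\{\l,D_X-\l\}$ in the partitionable case.

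Now the two directions. If $X$ partitions into $m$ pieces with $\diam X_i\le D_X-\e$, then with the corresponding $R$ we get $\dis R\le\max\{D_X-\e,\ D_X-\l,\ \l\}$; since $0<\l<D_X$ we have $\max\{D_X-\l,\l\}<D_X$, so $\dis R\le\max\{D_X-\e,D_X-\l,\l\}<D_X$, giving $2d_{GH}(\l\D_m,X)<D_X$. Conversely, suppose $2d_{GH}(\l\D_m,X)<D_X$; pick $R\in\cR^0(\l\D_m,X)$ with $\dis R<D_X$, set $\e:=D_X-\dis R>0$, and let $X_i:=R(s_i)$. Irreducibility (or a direct refinement argument) ensures the $X_i$ can be taken pairwise disjoint, hence a partition into at most $m$, and into exactly $m$ since $\#X\ge m$ allows padding with singletons without increasing diameters beyond $D_X-\e$ provided $D_X-\e\ge 0$ — here one must check singletons have diameter $0\le D_X-\e$, which holds as $\dis R\ge\l>0$ forces $\e\le D_X-\l< D_X$, wait: one needs $D_X - \e \ge 0$, i.e. $\dis R\le D_X$, true. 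Then $\diam X_i\le\dis R=D_X-\e$ for every $i$, so $X$ is partitioned into $m$ subsets of strictly smaller diameter. Finally, for the "if not" clause: if no such partition exists, the inequality $2d_{GH}(\l\D_m,X)\le\max\{\sup_i\diam X_i,D_X-\l,\l\}$ forces $\sup_i\diam X_i=D_X$ for every $m$-partition, while the lower bound from Example~\ref{examp:dGHbelowEstimate} together with $\diam(\l\D_m)=\l$ gives $2d_{GH}(\l\D_m,X)\ge|D_X-\l|$; a sharper lower bound — choosing a diametral or near-diametral pair $x,x'$ in $X$, any correspondence maps them either into a common $R(s_i)$ (distortion $\approx D_X$) or into different ones (distortion $\approx D_X-\l$), so $\dis R$ is bounded below by something tending to $D_X$ — yields $2d_{GH}(\l\D_m,X)\ge D_X$, and combined with Example~\ref{examp:GH-ineq-max-Diam-X-Y} (which gives $\le\tfrac12\max\{\l,D_X\}=\tfrac12 D_X$ since $\l<D_X$) we conclude equality $2d_{GH}(\l\D_m,X)=D_X$.

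The main obstacle is the last step's lower bound $2d_{GH}(\l\D_m,X)\ge D_X$ in the non-partitionable case: one must argue that if \emph{every} $m$-cover of $X$ has a piece of diameter exactly $D_X$ (more carefully: $\sup_i\diam X_i = D_X$ for all $m$-covers, which by a compactness/limiting argument should follow from non-existence of an $\e$-uniform improvement for any $\e>0$), then every correspondence $R$ has $\dis R\ge D_X-o(1)$, and in fact $\ge D_X$ after taking the infimum. The delicate point is handling the $\e$-quantifier: "no partition with uniformly strictly smaller diameter" is weaker than "every partition has a piece of diameter $D_X$", and bridging this gap — showing $\inf$ over $m$-partitions of $\sup_i\diam X_i$ equals $D_X$ — requires care, though it is exactly the content established in the cited paper~\cite{ITBorsFom} and may simply be quoted. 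All other steps are routine manipulations of the distortion formula.
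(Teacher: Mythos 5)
The paper itself states Theorem~\ref{thm:Borsuk} without proof, quoting it from~\cite{ITBorsFom}; your correspondence--distortion strategy is the standard one, and the two main directions of your argument are sound: a partition into $m$ pieces of diameter $\le\diam X-\e$ yields a correspondence $R$ with $\dis R\le\max\{\diam X-\e,\,\l,\,\diam X-\l\}<\diam X$, and conversely any $R$ with $\dis R<\diam X$ yields, after disjointifying the cover $\{R(s_i)\}$ and splitting pieces to restore cardinality $m$, a partition with all diameters $\le\dis R$.

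Two points need correction. First, your displayed ``equality'' for $\dis R$ and the ensuing claim that $2d_{GH}(\l\D_m,X)=\max\{\l,\diam X-\l\}$ in the partitionable case are false: the term $\l$ is a lower bound for $\dis R$ only when two distinct pieces contain points at distance much less than $\l$ (e.g.\ when they overlap), which a partition need not provide. Take $X$ a two-point space of diameter $1$, $m=2$, $\l=0.9$: the bijective correspondence has distortion $0.1$, so $2d_{GH}(\l\D_2,X)\le 0.1<\max\{\l,1-\l\}$. (Compare the Proposition in the preliminaries, where the hypothesis $m>\#X$ forces overlapping images by pigeonhole; that is exactly what fails for $m\le\#X$.) Fortunately you only ever use the upper bound $\dis R\le\max\{\sup_i\diam X_i,\,\l,\,\diam X-\l\}$, so this does not damage the argument. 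Second, the ``main obstacle'' you describe in the non-partitionable case is illusory, and the diametral-pair argument you sketch there does not give what you need (if the near-diametral pair lands in distinct pieces you only obtain $\dis R\ge\diam X-\l$). The correct argument is simply the contrapositive of your own converse direction: if some $R$ had $\dis R<\diam X$, your construction would produce an $m$-partition with all diameters $\le\dis R=\diam X-\e$, contradicting non-partitionability; hence every $R$ satisfies $\dis R\ge\diam X$, i.e.\ $2d_{GH}(\l\D_m,X)\ge\diam X$, while Example~\ref{examp:GH-ineq-max-Diam-X-Y} gives $2d_{GH}(\l\D_m,X)\le\max\{\l,\diam X\}=\diam X$. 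There is no $\e$-quantifier gap to bridge and nothing to quote from~\cite{ITBorsFom}: the uniform $\e$ comes for free as $\diam X-\dis R$.
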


\subsection{Clique Cover Number and Chromatic Number of a Simple Graph}

Recall that a subgraph of an arbitrary simple graph $G$ is called \emph{a clique}, if any its two vertices are connected by an edge, i.e., the clique is a subgraph which is a complete graph itself. Notice that each single-vertex subgraph is a single-vertex clique. For convenience, the vertex set of a clique is also referred as a \emph{clique}.

On the set of all cliques, an ordering with respect to inclusion is naturally defined, and hence, due to the above remarks, a family of maximal cliques is uniquely defined; this family forms \emph{a covering of the graph $G$} in the following sense: the union of all vertex sets of all maximal cliques coincides with the vertex set $V(G)$ of the graph $G$.

If one does not restrict himself by maximal cliques only, then, generally speaking, one can find other families of cliques covering the graph $G$. One of the classical problems of Graph Theory is to calculate the minimal possible number of cliques covering a given finite simple graph $G$. This number is referred as the \emph{clique cover number\/} and is often denoted by $\theta(G)$. It is easy to see that the value $\theta(G)$ also equals the least number of cliques whose vertex sets form a partition of $V(G)$.

Another popular problem is to find the least possible number of colors that is necessary to color the vertices of a simple finite (hyper-)graph $G$ without monochromatic edges. This number is denoted by $\g(G)$ and is referred as the \emph{chromatic number of the\/ \(hyper-\/\)graph $G$}.

For a simple graph $G$, by $G'$ we denote its \emph{complement graph}, i.e., the graph with the same vertex set and the complementary set of edges (two vertices of $G'$ are adjacent if and only if they are not adjacent in $G$). It is easy to see that $\theta(G)=\g(G')$ for any simple finite graph $G$, see for example~\cite{West}.

Let $G=(V,E)$ be an arbitrary finite graph. Fix two positive real numbers $a<b\le2a$ and define a metric on $V$ as follows: the distance between adjacent vertices equals $a$, and the distance between nonadjacent vertices equals $b$. Then a subset $V'\ss V$ has diameter $a$ if and only if $G(V')\ss G$ is a clique. This implies that the clique cover number of $G$ equals the least possible cardinality of partitions of the metric space $V$ into subsets of (strictly) smaller diameter. However, this number was calculated in Theorem~\ref{thm:Borsuk}. Thus, we get the following result.

\begin{cor}\label{cor:clique}
Let $G=(V,E)$ be an arbitrary finite graph. Fix two positive real numbers $a<b\le2a$ and define a metric on $V$ as follows\/\rom: the distance between adjacent vertices equals $a$, and the distance between nonadjacent ones equals $b$. Let $m$ be the greatest positive integer $k$ such that $2d_{GH}(a\D_k,V)=b$ \(in the case when there is no such $k$, we put $m=0$\). Then $\theta(G)=m+1$.
\end{cor}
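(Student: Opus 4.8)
The plan is to read the statement off from Theorem~\ref{thm:Borsuk}, together with the observation, already made in the text preceding the Corollary, that for the metric in question a subset $V'\ss V$ has diameter $\le a$ exactly when $G(V')$ is a clique (single vertices being cliques as well). Put $n=\#V$ and $\theta=\theta(G)$, and treat first the case in which $G$ is not complete, so that $\diam V=b$ and $\theta\ge2$.

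For every integer $k$ with $2\le k\le n$ I would show that $2d_{GH}(a\D_k,V)=b$ if and only if $V$ admits no partition into $k$ cliques. Indeed, since $0<a<b=\diam V$, Theorem~\ref{thm:Borsuk} applied with $\l=a$ gives $2d_{GH}(a\D_k,V)=b$ unless $V$ can be partitioned into $k$ subsets of strictly smaller diameter, and $2d_{GH}(a\D_k,V)<\diam V=b$ in the latter case. Since every nonzero distance on $V$ equals $a$ or $b$, a subset has diameter $<b$ precisely when its diameter is $\le a$, i.e. precisely when it is a clique; hence a partition of $V$ into $k$ subsets of strictly smaller diameter is the same thing as a partition of $V$ into $k$ cliques.

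Next I would determine the set $S=\{k\ge1:2d_{GH}(a\D_k,V)=b\}$. For $k=1$ we have $a\D_1=\D_1$, so $2d_{GH}(a\D_1,V)=\diam V=b$ by Example~\ref{examp:GH_simple}, whence $1\in S$. For $k>n$ the Proposition yields $2d_{GH}(a\D_k,V)=\max\{a,\diam V-a\}=\max\{a,b-a\}=a$, where the last equality uses the hypothesis $b\le2a$; since $a<b$, no such $k$ lies in $S$. For $2\le k\le n$ recall that $\theta$ also equals the least size of a clique partition of $V$: if $k<\theta$, then $V$ has no partition into $k$ cliques and so $k\in S$ by the previous paragraph; if $\theta\le k\le n$, then starting from a clique partition of size $\theta$ and repeatedly splitting off a single vertex from a clique with at least two vertices yields clique partitions of every size from $\theta$ up to $n$, so $k\notin S$. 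Therefore $S=\{1,\dots,\theta-1\}$, whose largest element is $\theta-1$, and $m=\theta-1$, i.e. $\theta(G)=m+1$.

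It remains to treat the case in which $G$ is complete, where $V$ is isometric to $a\D_n$, $\theta(G)=1$, and we must check $m=0$. Any correspondence between $a\D_k$ and $a\D_n$ has distortion $0$ or $a$, since every nonzero distance in each of these spaces equals $a$; hence $2d_{GH}(a\D_k,a\D_n)\le a<b$ for $2\le k\le n$, while $2d_{GH}(\D_1,a\D_n)=\diam(a\D_n)\le a<b$ and, for $k>n$, the Proposition again gives the value $a\ne b$. Thus $S=\0$, $m=0=\theta-1$, and the formula holds here too. I do not expect a genuine obstacle; the only points needing care are the range $k>\#V$, where one must invoke the Proposition and use $b\le2a$ to see that the distance drops to $a$ rather than $b$, and the degenerate complete-graph case, in which $\diam V=a$ so that Theorem~\ref{thm:Borsuk} cannot be applied with $\l=a$ and a direct estimate of the distances between simplices is used instead.
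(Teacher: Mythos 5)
Your proof is correct and follows essentially the same route as the paper, which derives the corollary from Theorem~\ref{thm:Borsuk} via the observation that subsets of diameter $\le a$ are exactly the cliques. You are in fact more careful than the paper's (essentially one-paragraph) justification: you explicitly handle the range $k>\#V$ using the Proposition and the hypothesis $b\le 2a$, and you treat the degenerate complete-graph case where $\diam V=a$ and Theorem~\ref{thm:Borsuk} does not apply with $\l=a$.
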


Because of the duality between clique cover and chromatic numbers, we get the following dual result.

\begin{cor}\label{cor:chrom}
Let $G=(V,E)$ be an arbitrary finite graph. Fix two positive real numbers $a<b\le2a$ and define a metric on $V$ as follows\/\rom: the distance between adjacent vertices equals $b$, and the distance between nonadjacent ones equals $a$. Let $m$ be the greatest positive integer $k$ such that $2d_{GH}(a\D_k,V)=b$ \(in the case when there is no such $k$, we put $m=0$\). Then $\g(G)=m+1$.
\end{cor}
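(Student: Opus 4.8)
The plan is to reduce the statement to Corollary~\ref{cor:clique} by means of the duality between the clique cover number and the chromatic number. First I would recall the elementary graph-theoretic fact, already stated in the text, that $\g(G)=\theta(G')$ for every finite simple graph $G$, where $G'$ denotes the complement graph: a clique cover of $G'$ by $k$ parts is exactly a partition of $V$ into $k$ classes no two vertices of which are adjacent in $G$, i.e.\ a proper $k$-coloring of $G$.

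Next I would observe that the metric placed on $V$ in the present corollary --- namely $|xy|=b$ when $x,y$ are adjacent in $G$ and $|xy|=a$ when they are nonadjacent in $G$ --- is literally the metric that Corollary~\ref{cor:clique} assigns to the graph $G'$. Indeed, two vertices are adjacent in $G'$ precisely when they are nonadjacent in $G$, and hence are at distance $a$; and two vertices are nonadjacent in $G'$ precisely when they are adjacent in $G$, and hence are at distance $b$. So the metric space $(V,d)$ under consideration here is exactly the one built from $G'$ in Corollary~\ref{cor:clique}, and the hypothesis $a<b\le 2a$ is the same in both statements, so that Theorem~\ref{thm:Borsuk} (with the short distance $a$ again in the role of the clique-diameter) applies verbatim.

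Applying Corollary~\ref{cor:clique} to the graph $G'$ with this metric then yields $\theta(G')=m+1$, where $m$ is the greatest positive integer $k$ with $2d_{GH}(a\D_k,V)=b$, and $m=0$ if no such $k$ exists --- which is precisely the quantity $m$ appearing in the statement. Combining this with $\g(G)=\theta(G')$ gives $\g(G)=m+1$, as required.

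I do not expect a genuine obstacle here; the only point deserving a line of care is the bookkeeping of which of the two distances $a,b$ plays the role of the "smaller diameter'' after passing to the complement, and the verification that the constraint $a<b\le 2a$ (needed so that the function on $V$ is an honest metric and so that Corollary~\ref{cor:clique} is applicable) is preserved. Both are immediate once the identification of the two metric spaces above is made explicit.
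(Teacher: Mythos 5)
Your proof is correct and is exactly the argument the paper intends: the corollary is stated as the ``dual result'' obtained by applying Corollary~\ref{cor:clique} to the complement graph $G'$ together with the identity $\g(G)=\theta(G')$. Your identification of the metric on $V$ with the one Corollary~\ref{cor:clique} assigns to $G'$, and your check that the hypothesis $a<b\le 2a$ carries over, fill in precisely the routine details the paper leaves implicit.
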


\begin{cor}\label{cor:k_color}
Let $G=(V,E)$ be an arbitrary finite graph. Fix two positive real numbers $a<b\le2a$ and define a metric on $V$ as follows\/\rom: the distance between adjacent vertices equals $b$, and the distance between nonadjacent ones equals $a$. If $2d_{GH}(a\D_k,V)=b$, then $\g(G)>k$.
\end{cor}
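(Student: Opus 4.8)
The plan is to read off the statement from Corollary~\ref{cor:chrom}, which is stated for exactly the same metric on $V$. Recall that there $m$ is the greatest positive integer with $2d_{GH}(a\D_m,V)=b$ (with $m=0$ if no such integer exists), and that $\g(G)=m+1$. So I would argue as follows: the hypothesis $2d_{GH}(a\D_k,V)=b$ says that $k$ belongs to the set of positive integers having this property, so that set is non-empty, $m\ge1$, and by maximality $k\le m$; hence $\g(G)=m+1\ge k+1>k$. This is the whole argument; everything nontrivial has already been absorbed into Corollary~\ref{cor:chrom}.

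For completeness I would also indicate how the conclusion follows directly from Theorem~\ref{thm:Borsuk}, bypassing Corollary~\ref{cor:chrom}. The point to notice is that, for the metric in question (adjacent vertices at distance $b$, non-adjacent ones at distance $a$, with $a<b$), a subset $V'\ss V$ satisfies $\diam V'<b$ precisely when $V'$ contains no edge of $G$, i.e. $V'$ is independent; since the possible diameters of subsets are only $0$, $a$, $b$, a partition of $V$ into $k$ parts of strictly smaller diameter is exactly a proper $k$-colouring of $G$ (one may take $\e=b-a$ in the definition of ``strictly smaller diameters''). Applying Theorem~\ref{thm:Borsuk} with $\l=a$, which is legitimate once $G$ has an edge, since then $0<a<b=\diam V$, we get that $V$ can be partitioned into $k$ subsets of strictly smaller diameter if and only if $2d_{GH}(a\D_k,V)<b$. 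The contrapositive reads: if $2d_{GH}(a\D_k,V)=b$, then $G$ admits no proper $k$-colouring, i.e. $\g(G)>k$.

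The only delicate point in this second route — and the reason I would ultimately prefer the one-line deduction from Corollary~\ref{cor:chrom} — is the handling of boundary values of $k$, since Theorem~\ref{thm:Borsuk} requires $2\le k\le\#V$. For $k>\#V$ the Proposition above on the Gromov--Hausdorff distance to a large simplex gives $2d_{GH}(a\D_k,V)=\max\{a,\diam V-a\}=a\ne b$ (using $b\le2a$, hence $\diam V-a\le a$), so the hypothesis of the corollary is not satisfied and there is nothing to prove; for $k=1$ one uses instead $2d_{GH}(a\D_1,V)=\diam V$ from Example~\ref{examp:GH_simple}, so that the hypothesis $2d_{GH}(a\D_1,V)=b$ forces $\diam V=b$, whence $G$ has an edge and $\g(G)\ge2>1$. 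Since Corollary~\ref{cor:chrom} already incorporates all of these cases, the actual proof I would write down is simply the short deduction of the first paragraph.
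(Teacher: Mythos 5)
Your one-line deduction from Corollary~\ref{cor:chrom} is exactly how the paper intends this corollary to be read (it is stated without proof as an immediate consequence of the duality), and your argument is correct, including the check that the set of admissible $k$ is bounded so that the maximum $m$ exists. The alternative route via Theorem~\ref{thm:Borsuk} with the boundary cases $k=1$ and $k>\#V$ is also sound, but adds nothing beyond what Corollary~\ref{cor:chrom} already packages.
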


\subsection{Embeddings into Rectilinear Spaces}
Recall that the rectilinear $m$-dimensional space $\R^m_1$ is the real $m$-dimensional space $\R^m$ endowed with the distance function generated by the norm $\|y\|_1=\sum_i|y_i|$ for $y=(y_1,\ldots,y_m)\in\R^m$. A pseudometric space $(X,d)$ is said to be \emph{$m$-embeddable into rectilinear space\/} if there exists an isometrical embedding $f\:X\to\R^m_1$. The latter means that $d(a,b)=\big\|f(a)-f(b)\big\|_1$ for all $a,\,b\in X$. As we have already mentioned above, the $m$-embeddability problem of arbitrary pseudometric space is equivalent to the one for its finite subspaces (see a proof in~\cite{DezaLaurent}).

To describe finite pseudometric spaces embeddable into rectilinear spaces we need to recall the concepts of cut pseudometrics and cut cones. Let $X$ be a finite set consisting of $n$ elements. Without loss of generality, we put $X=\{1,\ldots,n\}$. For a proper subset $S\subset X$, the pair $\{S,S'\}$, where $S'$ stands for $X\sm S$, is called a \emph{cut}. For a cut $c=\{S,S'\}$, define a pseudometric $\delta_c$ on $X$ as follows: $\delta_c(i,j)=1$ if $\#\big(S\cap\{i,j\}\big)=1$, and $\delta_c(i,j)=0$ otherwise. The pseudometric $\delta_c$ is referred as the \emph{cut metric\/} corresponding to the cut $c=\{S,S'\}$.

Recall that each pseudometric $d$ on $X=\{1,\ldots,n\}$ defines the square symmetric matrix $\big(d(i,j)\big)$ with $d(i,i)=0$ that can be given by the vector $\big(d(1,2),\ldots,d(n-1,n)\big)$ in $\R^N$, $N=n(n-1)/2$. The set of all such vectors is a convex cone in $\R^N$ which is called the \emph{metric cone}. Let $\cC$ be a family of cuts of $X$, and $\l\:\cC\to\R$ be a mapping such that $\l_c:=\l(c)>0$ for all $c\in\cC$. The pseudometric
$$
d(\cC,\l)=\sum_{c\in\cC}\l_c\dl_c
$$
is called a \emph{cut metric\/} corresponding to the family $\cC$ of cuts. Consider all such metrics $d(\cC,\l)$ over all cuts $\cC$ and all mappings $\l$. They form another cone in $\R^N$ that is referred as the \emph{cut cone\/} and is denoted by $\cut_n$.  The following criterion is obtained in~\cite{Assouad}.

\begin{ass}\label{ass:emb_cut}
A finite metric space $(X,d)$ is isometrically embeddable in a rectilinear space if and only if $d\in \cut_{\#X}$.
\end{ass}

\begin{rk}
The same criterion works for $L_1$-embeddability of finite metric spaces, i.e., a finite metric space $(X,d)$ is isometrically embeddable in an $L_1$-space if and only if $d\in \cut_{\#X}$ (and if and only if it is embeddable in a rectilinear space).
\end{rk}

\begin{rk}
The condition of Assertion~\ref{ass:emb_cut} seems simple, but it is rather difficult to verify. Moreover, it is proved that the problem is $\NP$-complete.
\end{rk}

If the answer is positive, that is, if a metric space $(X,d)$ turns out to be embeddable, then the question on the minimal admissible dimension of the rectilinear space arises naturally. This minimal dimension is referred as \emph{$\ell_1$-dimension\/} of the space $(X,d)$.

To state the coloring criterion found in~\cite{BCL} we need to construct a special hypergraph associated with a cut system $\cC$ of a set $X$. Two cuts $\{A,A'\}$ and $\{B,B'\}$ are said to be \emph{incompatible\/} if all four intersections $A\cap B$, $A\cap B'$, $A'\cap B$, and $A'\cap B'$ are non-empty. Three cuts $\{A,A'\}$, $\{B,B'\}$, and $\{C,C'\}$ are said to form an \emph{asteroid triplet\/} if one can choose one set $\tilde A\in\{A,A'\}$,  $\tilde B\in\{B,B'\}$, and $\tilde C\in\{C,C'\}$ from each cut in such a way that the three resulting sets $\tilde A$, $\tilde B$, and $\tilde C$ are pairwise disjoint. The hypergraph with vertex set $\cC$ whose edges are all incompatible pairs and all asteroid triplets is called \emph{nesting hypergraph of $\cC$} and is denoted by $\G(\cC)$.

One says that a hypergraph $G=(V,E)$ is \emph{$m$-colorable} if there exists a coloring with $m$ colors without monochromatic edges. In~\cite{BCL} the following result is proved.

\begin{ass}\label{ass:chrom-emb}
Let $(X,d)$ be a finite metric space, and
$$
d=\sum_{c\in\cC}\l_c\dl_c, \qquad \l_c>0,
$$
where $\cC$ is a cut family on $X$. Then $(X,d)$ is embeddable in $m$-dimensional rectilinear space if and only if the corresponding nesting hypergraph $\G(\cC)$ is $m$-colorable. In particular, the chromatic number of $\G(\cC)$ equals the $\ell_1$-dimension of $(X,d)$.
\end{ass}

\section{Embeddings an Gromov--Hausdorff Distance}
\markright{\thesection.~Embeddings an Gromov--Hausdorff Distance}

The main result of the paper is based on two results: Assertion~\ref{ass:chrom-emb} and Corollary~\ref{cor:chrom}.

Let $\cC$ be an arbitrary cut family of a finite set $X$, and $\G(\cC)$ the corresponding nesting hypergraph. Notice that for any its edge $\{a,b,c\}$ corresponding to an asteroid triplet, none of the pairs $\{a,b\}$, $\{b,c\}$, and $\{a,c\}$ forms an edge of $\G(\cC)$. Construct a family of simple graphs with vertex set $\cC$ as follows: for each asteroid triplet $\{a,b,c\}$ choose one of the pairs $\{a,b\}$, $\{b,c\}$, $\{a,c\}$, add it to the edge set and delete the triplet. Notice that such a pair could belong to several asteroid triplets, but if it is chosen several times, we add it to the edge set only once to avoid multiple edges. As a result, we get at most $3^k$ simple graphs $\{G_i\}$, where $k$ stands for the number of the asteroid triplets in the nesting hypergraph. We put $\cG(\cC)=\{G_i\}$.

\begin{rk}
If the nesting hypergraph does not contain any asteroid triplet, then it is a simple graph itself and the family $\cG(\cC)$ consists of a single element $\G(\cC)$.
\end{rk}

\begin{lem}\label{lem:hyp_simp}
Let $\cC$ be an arbitrary cut family of a finite set $X$, and $\G(\cC)$ corresponding nesting hypergraph, and let $\cG(\cC)=\{G_i\}$ be the family of simple graphs constructed above. Then $\G(\cC)$ is $m$-colorable if and only if the family $\cG(\cC)$ contains an $m$-colorable simple graph.
\end{lem}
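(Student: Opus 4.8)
The plan is to prove both implications directly by manipulating colorings. First, suppose some $G_i\in\cG(\cC)$ is $m$-colorable, and fix a coloring $\chi\:\cC\to\{1,\dots,m\}$ of $V(G_i)=\cC$ with no monochromatic edge of $G_i$. I claim $\chi$ is also a valid coloring of $\G(\cC)$. Every edge of $\G(\cC)$ is either an incompatible pair or an asteroid triplet. Every incompatible pair is, by construction, an edge of every $G_i$ (the construction only ever deletes asteroid triplets and adds pairs coming from asteroid triplets; incompatible pairs are never touched, and since the paper remarks that for an asteroid triplet $\{a,b,c\}$ none of $\{a,b\},\{b,c\},\{a,c\}$ is an edge of $\G(\cC)$, in particular none of them is an incompatible pair, so the two kinds of added/removed edges never collide). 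Hence $\chi$ has no monochromatic incompatible pair. For an asteroid triplet $\{a,b,c\}$, the construction of $G_i$ replaced it by one of its three pairs, say $\{a,b\}\in E(G_i)$; since $\chi(a)\ne\chi(b)$, the triplet $\{a,b,c\}$ is not monochromatic under $\chi$. Thus $\chi$ witnesses $m$-colorability of $\G(\cC)$.

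Conversely, suppose $\G(\cC)$ is $m$-colorable via $\chi\:\cC\to\{1,\dots,m\}$. For each asteroid triplet $T=\{a,b,c\}$, since $\chi$ is not monochromatic on $T$, at least one of the three pairs $\{a,b\},\{b,c\},\{a,c\}$ is bichromatic under $\chi$; choose such a pair and let it be the pair assigned to $T$ in the construction. Running the construction with these choices produces a particular graph $G_{i_0}\in\cG(\cC)$ whose edge set consists of the incompatible pairs together with the chosen (bichromatic) pairs. Then $\chi$ has no monochromatic edge of $G_{i_0}$: on incompatible pairs because these are edges of $\G(\cC)$, and on the chosen pairs because they were selected to be bichromatic. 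Hence $G_{i_0}\in\cG(\cC)$ is $m$-colorable, as required.

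The only point requiring care is the bookkeeping in the construction: one must check that a pair chosen for one asteroid triplet is consistent with (and does not secretly create a monochromatic edge via) another triplet or an incompatible pair. This is handled by the observation above that the pairs arising from asteroid triplets are never incompatible pairs, so adding them cannot turn a previously valid coloring invalid on the incompatible-pair edges; and a pair that is bichromatic is bichromatic regardless of how many triplets it represents, so the "identify repeated edges" step in the construction is harmless. I expect this compatibility check to be the main (and essentially only) obstacle; everything else is a direct transport of the coloring. Note also that the forward direction shows every $G_i$ refines $\G(\cC)$ as a coloring constraint, while the backward direction shows $\G(\cC)$ is recovered as the "hardest" member of $\cG(\cC)$, which is exactly the asymmetry one should expect.
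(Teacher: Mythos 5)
Your proposal is correct and follows essentially the same argument as the paper: transport the coloring in both directions, using that each asteroid triplet of $\G(\cC)$ is replaced in any $G_i$ by one of its pairs (so a proper coloring of $G_i$ leaves no monochromatic triplet), and conversely that a proper coloring of $\G(\cC)$ selects a bichromatic pair in each triplet, determining a properly colored member of $\cG(\cC)$. The extra bookkeeping about incompatible pairs never colliding with added pairs is a harmless elaboration the paper leaves implicit.
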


\begin{proof}
Let $\G(\cC)$ be $m$-colorable, and fix some its $m$-coloring $\xi$ without monochromatic edges. Then each asteroid triplet contains at least two vertices of different color. Construct simple graph $G$ deleting each asteroid triplet and adding an edge connecting these two vertices. If vertices of such edge belong to several asteroid triplets, then we add this edge at most once. The resulting simple graph belongs to $\cG(\cC)$ and $\xi$ is its coloring without  monochromatic edges.

Conversely, let $G\in\cG(\cC)$ and $\xi$ be some its coloring without monochromatic edges. Then $\xi$ is also a coloring of the nesting hypergraph $\G(\cC)$ that does not contain monochromatic edges. Indeed, each asteroid triplet of $\G(\cC)$ contains an edge of $G$ and hence, contains at least two vertices of different color.
\end{proof}

\begin{cor}\label{cor:emd_simp_graph}
Let $(X,d)$ be a finite metric space, and
$$
d=\sum_{c\in\cC}\l_c\dl_c, \qquad \l_c>0,
$$
where $\cC$ is a cut family on $X$. Then $(X,d)$ is embeddable in $m$-dimensional rectilinear space if and only if the corresponding family  $\cG(\cC)$ of simple graphs contains an $m$-colorable graph. The $\ell_1$-dimension of $(X,d)$ equals $\min_G\g(G)$, where minimum is taken over all $G\in\cG(\cC)$.
\end{cor}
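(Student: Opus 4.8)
The plan is to chain together Assertion~\ref{ass:chrom-emb} and Lemma~\ref{lem:hyp_simp} directly. By Assertion~\ref{ass:chrom-emb}, the space $(X,d)$ with the given cut decomposition $d=\sum_{c\in\cC}\l_c\dl_c$ is embeddable in $m$-dimensional rectilinear space if and only if the nesting hypergraph $\G(\cC)$ is $m$-colorable. By Lemma~\ref{lem:hyp_simp}, $\G(\cC)$ is $m$-colorable if and only if some graph $G\in\cG(\cC)$ is $m$-colorable. Composing these two equivalences yields the first claim: $(X,d)$ is $m$-embeddable into rectilinear space if and only if $\cG(\cC)$ contains an $m$-colorable graph.

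For the statement about the $\ell_1$-dimension, recall that by Assertion~\ref{ass:chrom-emb} the $\ell_1$-dimension of $(X,d)$ equals $\g\big(\G(\cC)\big)$. So it suffices to show $\g\big(\G(\cC)\big)=\min_{G\in\cG(\cC)}\g(G)$. The inequality $\g\big(\G(\cC)\big)\le\min_G\g(G)$ follows because, by the second (converse) part of the proof of Lemma~\ref{lem:hyp_simp}, any coloring of any $G\in\cG(\cC)$ without monochromatic edges is also a coloring of $\G(\cC)$ without monochromatic edges; hence $\g\big(\G(\cC)\big)\le\g(G)$ for every $G$, and we may pass to the minimum. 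Conversely, set $m=\g\big(\G(\cC)\big)$; then $\G(\cC)$ is $m$-colorable, so by the first part of Lemma~\ref{lem:hyp_simp} some $G\in\cG(\cC)$ is $m$-colorable, i.e.\ $\g(G)\le m=\g\big(\G(\cC)\big)$, whence $\min_G\g(G)\le\g\big(\G(\cC)\big)$. Combining the two inequalities gives the desired equality.

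I do not expect any real obstacle here: the corollary is a routine concatenation of two previously established equivalences, the only mild point being to track the two directions of Lemma~\ref{lem:hyp_simp} carefully enough to read off the equality of chromatic numbers rather than just the qualitative $m$-colorability statement. One should also note that $\cG(\cC)$ is nonempty (it contains at least one graph by construction, the single graph $\G(\cC)$ itself when there are no asteroid triplets), so that the minimum is taken over a nonempty finite set and is well defined.
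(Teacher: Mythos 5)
Your proposal is correct and matches the paper's (implicit) argument: the corollary is stated without a separate proof precisely because it is the direct concatenation of Assertion~\ref{ass:chrom-emb} and Lemma~\ref{lem:hyp_simp}, which is exactly what you carry out, including the careful two-inequality derivation of $\g\bigl(\G(\cC)\bigr)=\min_{G\in\cG(\cC)}\g(G)$ for the $\ell_1$-dimension claim.
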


Now, fix two positive real numbers $0<a<b\le2a$, and for each graph $G$ from $\cG(\cC)$ construct a $2$-distant metric $d_G$ on $\cC$  as follows: put $d_G(s,t)=b$ if and only if $s$ and $t$ are adjacent, and $d_G(s,t)=a$ otherwise. By $\cC_G$ we denote the resulting metric space $(\cC,d_G)$.

\begin{thm}\label{thm:emb_GH}
Let $(X,d)$ be a finite metric space, and
$$
d=\sum_{c\in\cC}\l_c\dl_c, \qquad \l_c>0,
$$
where $\cC$ is a cut family on $X$. Then the $\ell_1$-dimension of $(X,d)$ equals $m$, if and only if $m$ is the least positive integer such that
$$
\min_G 2d_{GH}(a\D_m,\cC_G)<b,
$$
where minimum is taken over the family $\cG(\cC)$, and $a\D_m$ stands for the single-distance metric space with nonzero distance $a$.
\end{thm}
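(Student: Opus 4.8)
The plan is to combine the reduction of hypergraph colorability to simple-graph colorability (Lemma~\ref{lem:hyp_simp} and Corollary~\ref{cor:emd_simp_graph}) with the Gromov--Hausdorff characterization of the chromatic number (Corollary~\ref{cor:chrom} and Corollary~\ref{cor:k_color}). By Corollary~\ref{cor:emd_simp_graph}, the $\ell_1$-dimension of $(X,d)$ equals $\min_G\g(G)$ over $G\in\cG(\cC)$. So it suffices to show that, for fixed $0<a<b\le2a$, the quantity $\min_G\g(G)$ equals the least positive integer $m$ with $\min_G 2d_{GH}(a\D_m,\cC_G)<b$.

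First I would unwind what Corollary~\ref{cor:chrom} gives for a single graph $G$: with the metric $d_G$ on $\cC$ (distance $b$ on adjacent pairs, $a$ on nonadjacent pairs), the chromatic number $\g(G)$ equals $m_G+1$, where $m_G$ is the greatest positive integer $k$ with $2d_{GH}(a\D_k,\cC_G)=b$ (and $m_G=0$ if no such $k$ exists). The key monotonicity fact I would record next is that, for each fixed $G$, the function $k\mapsto 2d_{GH}(a\D_k,\cC_G)$ behaves as follows: it equals $b$ for $1\le k\le m_G$, and for $k>m_G$ it is strictly less than $b$. The inequality $2d_{GH}(a\D_k,\cC_G)\le b$ for all $k$ follows from Example~\ref{examp:GH-ineq-max-Diam-X-Y} since $\diam a\D_k=a<b=\diam\cC_G$ (here $b\le 2a$ is used to guarantee $d_G$ is a metric and $\diam\cC_G=b$); strictness for $k>m_G$ is exactly the statement that $2d_{GH}(a\D_k,\cC_G)\ne b$, which by Corollary~\ref{cor:k_color} is equivalent to $\g(G)\le k$, i.e. $k\ge m_G+1$. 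Thus for each $G$ one has the clean equivalence
$$
2d_{GH}(a\D_k,\cC_G)<b \iff k\ge\g(G).
$$

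Given this equivalence for each $G$, the theorem is immediate: $\min_G 2d_{GH}(a\D_m,\cC_G)<b$ holds iff some $G\in\cG(\cC)$ satisfies $2d_{GH}(a\D_m,\cC_G)<b$, iff some $G$ has $\g(G)\le m$, iff $\min_G\g(G)\le m$. Hence the least positive integer $m$ for which $\min_G 2d_{GH}(a\D_m,\cC_G)<b$ is exactly $\min_G\g(G)$, which by Corollary~\ref{cor:emd_simp_graph} is the $\ell_1$-dimension of $(X,d)$. (One should also note $\cC$ is nonempty and each $\cC_G$ has at least two points when $\G(\cC)$ is a nontrivial graph; the degenerate cases where $\cC$ is a single cut, so the $\ell_1$-dimension is $1$, can be checked directly.)

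The main obstacle I anticipate is not the logical skeleton above but making the per-graph equivalence airtight at the boundary: one must be careful that Corollaries~\ref{cor:chrom} and~\ref{cor:k_color} are being applied with the correct assignment of $a$ and $b$ (in $\cC_G$ the \emph{larger} value $b$ is the distance between adjacent vertices, matching the hypotheses of Corollary~\ref{cor:chrom}), and that the hypothesis $a<b\le2a$ is genuinely needed so that $d_G$ is a metric and $\diam\cC_G=b$ exactly. A secondary point worth stating explicitly is why $2d_{GH}(a\D_k,\cC_G)$ cannot exceed $b$ and why it is \emph{monotone nonincreasing} in $k$ (adding an extra point to a simplex can only help a correspondence), so that "$<b$ for some $k$" propagates upward to all larger $k$; this is what lets us speak of a well-defined least $m$. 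Once these bookkeeping matters are settled, the proof is a two-line deduction from the cited corollaries.
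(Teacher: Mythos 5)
Your proposal is correct and follows essentially the same route as the paper: reduce to $\min_G\g(G)$ via Corollary~\ref{cor:emd_simp_graph} and then translate $\g(G)$ into the condition $2d_{GH}(a\D_k,\cC_G)<b$ using Corollaries~\ref{cor:chrom} and~\ref{cor:k_color}. Your explicit isolation of the per-graph equivalence $2d_{GH}(a\D_k,\cC_G)<b\iff k\ge\g(G)$, together with the remark that the monotonicity in $k$ needs justification (which is cleanest via Theorem~\ref{thm:Borsuk} and splitting a part of a partition, rather than the correspondence heuristic), is a tidier packaging of the same argument.
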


\begin{proof}
It is easy to see that $2d_{GH}(a\D_m,\cC_G)\le b$ for all positive integer $m$, and any $G\in\cG(\cC)$. Moreover, if $m\ge \#\cC$, then for any $G\in\cG(\cC)$ the distortion of any correspondence between $a\D_m$ and $\cC_G$, such that preimages of different cuts do not intersect each other is less than $b$. Therefore the set of positive integers such that
$$
\min_G 2d_{GH}(a\D_m,\cC_G)<b
$$
is not empty.

Let $m$ be the least positive integer such that
$$
\min_G 2d_{GH}(a\D_m,\cC_G)<b, \qquad G\in\cG(\cC).
$$
Let us start with the case $m=1$. Recall that  $2d_{GH}(a\D_1,\cC_G)=\diam \cC_G$, see Example~\ref{examp:GH_simple}, therefore in this case all the distances in the $2$-distance space $\cC_G$ equal $a$, and hence the graph $G$ is empty (i.e., it has no edges). So, the nesting graph $\G(\cC)$ is also empty and can be colored in one color. Therefore, $(X,d)$ is embeddable in the straight line $\R^1=\R^1_1$, and hence $\ell_1$-dimension of $(X,d)$ equals $1$.

Now, let $m\ge 2$. Due to assumptions, there exists $G_0\in\cG(\cC)$ such that $2d_{GH}(a\D_m,\cC_{G_0})<b$, and  $2d_{GH}(a\D_k,\cC_G)=b$ for all $k$, $1\le k<m$, and all $G\in\cG(\cC)$. That is, $(m-1)$ is the greatest positive integer such that $2d_{GH}(a\D_k,\cC_{G_0})=b$, and hence, due to Corollary~\ref{cor:chrom}, $\g(G_0)=m$, and $(X,d)$ is embeddable in $m$-dimensional rectilinear space in accordance with Corollary~\ref{cor:emd_simp_graph}.

Further, if $(X,d)$ is embeddable in $k$-dimensional rectilinear space, then, due to Corollary~\ref{cor:emd_simp_graph}, the family $\cG(\cC)$ contains a $k$-colorable graph $G$. Consider the corresponding $2$-distant metric space $\cC_G=(\cC,d_G)$.  In accordance with Corollary~\ref{cor:k_color}, $2d_{GH}(a\D_k,\cC_G)<b$, and hence $\min_G 2d_{GH}(a\D_k,\cC_G)<b$. Therefore $k\ge m$, due to assumptions, and so $m$ is the $\ell_1$-dimension of $(X,d)$.

Conversely, let $\ell_1$-dimension of $(X,d)$ equals $m$. As it is already shown above, the latter implies that $\min_G 2d_{GH}(a\D_m,\cC_G)<b$. If there exists $k<m$ such that $\min_G2d_{GH}(a\D_m,\cC_G)<b$, then, due to the direct statement, $(X,d)$ is embeddable in $k$-dimensional rectilinear space, and $\ell_1$-dimension of $(X,d)$ is less than $m$, a contradiction. Theorem is proved.
\end{proof}

\markright{References}

{\bf Affiliations}
\begin{description}
\item[Alexander Ivanov:] Lomonosov Moscow State University, Faculty of Mechanics and Mathematics, Leninskie Gory 1, Moscow, 119991, Russia; Bauman Moscow Technical State University, 2-aya Baumaskaya ul., d.~5, s.~1, Moscow, 105005, Russia.

e-mail: aoiva@mech.math.msu.su; alexandr.o.ivanov@gmail.com

\item[Alexey Tuzhilin:] Lomonosov Moscow State University, Faculty of Mechanics and Mathematics, Leninskie Gory 1, Moscow, 119991, Russia.

e-mail: tuz@mech.math.msu.su
\end{description}


\begin{thebibliography}{42}

\bibitem{Assouad} P.~Assouad, Plongements isom\'etriques dans $L^1$: aspect analytique,'' Number~14 in \emph{S\'eminaire d’Initiation \`a l’Analyse}, Num.~14,  (Ed. by G.~Choquet, H.~Fakhoury, J.~Saint-Raymond). Universit\'e Paris~VI, 1979--1980.

\bibitem{AssouadDeza} P.~Assouad and M.~Deza, ``Metric subspaces of $L^1$'',  in \emph{Publications Math\`ematiques d’Orsay}, Num.~82--03, Universit\`e de Paris--Sud, Orsay, 1982.

\bibitem{BCL} H.-J.~Bandelt, V.~Chepoi, and M.~Laurent, ``Embedding into Rectilinear Spaces'', Discrete Comput. Geom., vol.~19, pp.~595--604, 1998.

\bibitem{BorCong} K.~Borsuk, ``\"Uber die Zerlegung einer $n$-dimensionalen Vollkugel in $n$-Mengen''. In: {\emph Verh. International Math. Kongress Z\"urich},  p.~192, 1932.

\bibitem{Borsuk} K.~Borsuk, ``Drei S\"atze \"uber die $n$-dimensionale euklidische Sph\"are'', Fundamenta Math., {\bf 20}, 177--190, 1933.

\bibitem{Blum} L.\,M.~Blumenthal, \emph{Theory and Applications of Distance Geometry}, Oxford University Press, Oxford, 1953.

\bibitem{Bond} A.\,V.~Bondarenko,  ``On Borsuk’s conjecture for two-distance sets'', arXiv e-prints, \texttt{arXiv:1305.2584}, 2013.

\bibitem{BDCK} J.~Bretagnolle, D.~Dacunha Castelle, and J.--L.~Krivine, ``Lois stables et espaces $L^p$'', Annales de l’Institut Henri Poincar\`e, vol.~2, pp.~231--259, 1966.

\bibitem{BurBurIva} D.~Burago, Yu.~Burago, S.~Ivanov, \emph{A Course in Metric Geometry}, Graduate Studies in Mathematics {\bf 33} A.M.S., Providence, RI, 2001.

\bibitem{Cayley} A.~Cayley, ``On a theorem in the geometry of position'', Cambridge Mathematical Journal, vol.~II, pp.~267--271, 1841.

\bibitem{ChowdhuryMemoli} S.~Chowdhury, F.~Memoli, ``Constructing Geodesics on the Space of Compact Metric Spaces'', ArXiv e-prints, {\tt arXiv:1603.02385}, 2016.

\bibitem{DezaLaurent} M.\,M.~Deza, M.~Laurent, \emph{Geometry of Cuts and Metrics}, Springer-Verlag, Berlin Heidelberg, 1997.

\bibitem{Edwards} D.~Edwards, ``The Structure of Superspace'', in {\it Studies in Topology}, Academic Press, 1975.

\bibitem{GrigIvaTuzSimpDist} D.\,S.~Grigor'ev, A.\,O.~Ivanov, A.\,A.~Tuzhilin, ``Gromov--Hausdorff Distance to Simplexes'', Chebyshev. Sbornik, {\bf 20} (2), 100--114, 2019 [in Russian], see also ArXiv e-prints, {\tt arXiv:1906.09644}, 2019.

\bibitem{Gromov} M.~Gromov, ``Groups of Polynomial growth and Expanding Maps'', Publications Mathematiques I.H.E.S., {\bf 53}, 1981.

\bibitem{Hadw} H.~Hadwiger, ``\"Uberdeckung einer Menge durch Mengen kleineren Durchmessers'', Commentarii Mathematici Helvetici, {\bf 18} (1), 73--75, 1945.

\bibitem{Hadw2} H.~Hadwiger, ``Mitteilung betreffend meine Note: \"Uberdeckung einer Menge durch Mengen kleineren Durchmessers'', Commentarii Mathematici Helvetici, {\bf 19}, 1946.

\bibitem{Hausdorff} F.~Hausdorff,  {\it Grundz\"uge der Mengenlehre},  Veit, Leipzig, 1914 [reprinted by Chelsea in 1949].

\bibitem{IvaTuzSimpDist8} S.\,D.~Iliadis, A.\,O.~Ivanov, and A.\,A.~Tuzhilin, ``Geometry of Compact Metric Space in Terms of Gromov-Hausdorff Distances to Regular Simplexes'', ArXiv e-prints, {\tt arXiv:1607.06655}, 2016.

\bibitem{IvaIlTuzRealiz} S.~Iliadis, A.~Ivanov,  A.~Tuzhilin, ``Realizations of Gromov-Hausdorff Distance'', ArXiv e-prints, {\tt 	 arXiv:1603.08850}, 2016.

\bibitem{IvaTuzLocalStrIsom} S.~Iliadis, A.~Ivanov, A.~Tuzhilin, ``Local Structure of Gromov--Hausdorff Space, and Isometric Embeddings of Finite Metric Spaces into this Space'', Topology and its Applications,  {\bf 221}, pp.~393--398, 2017 ({\tt arXiv:1604.07615, arXiv:1611.04484}, 2016).

\bibitem{IvaNikolaevaTuz} A.\,O.~Ivanov, N.\,K.~Nikolaeva, A.\,A.~Tuzhilin,  ``The Gromov--Hausdorff Metric on the Space of Compact Metric Spaces is Strictly Intrinsic'', Mathematical Notes, {\bf 100} (6), pp.~171--173, 2016 ({\tt arXiv:1504.03830}, 2015).

\bibitem{IvaTuz_corr} A.\,O.~Ivanov, A.\,A.~Tuzhilin, ``Gromov--Hausdorff Distance, Irreducible Correspondences, Steiner Problem, and Minimal Fillings'', ArXiv e-prints, {\tt arXiv:1604.06116}, 2016.

\bibitem{ITautoGH} A.\,O.~Ivanov, A.\,A.~Tuzhilin, ``Isometry group of Gromov--Hausdorff Space'',  Matematicki Vesnik, {\bf 71} (1--2), 123--154, 2019.

\bibitem{ITBors} A.\,O.~Ivanov, A.\,A.~Tuzhilin, ``Solution to generalized Borsuk problem in terms of the Gromov--Hausdorff distances to simplexes'', ArXiv e-prints, {\tt arXiv:1906.10574}, 2019.

\bibitem{IvaTuzSimpTwoDist} A.\,O.~Ivanov, A.\,A.~Tuzhilin, ``The Gromov--Hausdorff Distance Between Simplexes and Two-Distance Spaces'', ArXiv e-prints, {\tt arXiv:1907.09942}, 2019.

\bibitem{ITBorsFom} A.\,O.~Ivanov, A.\,A.~Tuzhilin, ``Gromov--Hausdorff distances to simplexes and some applications to discrete optimisation'', Chebyshevski Sbornik. Vol. 21 (2),  pp.~169--189, 2020.

\bibitem{ITBUSp} A.\,O.~Ivanov, A.\,A.~Tuzhilin,  ``Calculation of the Gromov--Hausdorff distance in terms of the Borsuk Number'',  Vestnik Mosk. Univ., Ser.~1: Matem., Mekh., No.~1, pp.~33--38, 2023 [ArXiv e-prints,  {\tt arXiv:2203.04030}, 2022].

\bibitem{ITEmb} A.\,O.~Ivanov, A.\,A.~Tuzhilin, ``Isometric embeddings of bounded metric spaces in the Gromov-Hausdorff class'', Sb. Math., vol.~213 (10), pp.~1400--1414, 2022.

\bibitem{Jenr} T.~Jenrich,  ``A $64$-dimensional two-distance counterexample to Borsuk's conjecture'', arXiv e-prints, \texttt{arXiv:1308.0206}, 2013.

\bibitem{KahnKalai} J.~Kahn, G.~Kalai, \emph{A counterexample to Borsuk’s conjecture}. Bull. Amer. Math. Soc., {\bf 29} (1), 60--62, 1993.

\bibitem{Kurat} K.~Kuratowski, ``Quelques probl\`emes concernant les espaces m\'etriques non-separables'', Fundamenta Math., vol.~25, pp. 534--545, 1935.

\bibitem{Lewis} R.\,M.\,R.~Lewis, \emph{A Guide to Graph Colouring}, Springer, Cham, 2016.

\bibitem{LustSch} L.\,A.~Lusternik,  L.\,G.~Schnirelmann,  \emph{Topological Methods in Variational Problems}. Issled. Inst. Matem. i Mekh. pri I MGU, Moscow, 1930 [in Russian].

\bibitem{Menger} K.~Menger ``Untersuchungen \"uber allgemeine Metrik'', Mathematische Annalen, vol.~100, pp.~75--163, 1928.

\bibitem{Nash} J.~Nash, ``The imbedding problem for Riemannian manifolds'', Annals of Math., Second Ser., vol.~63 (1), pp.~20--63, 1956.

\bibitem{Raig} A.\,M.~Raigorodskii, ``Around Borsuk’s Hypothesis'',  Journal of Mathematical Sciences, {\bf 154} (4),  604--623, 2008.

\bibitem{Schon} I.\,J.~Schoenberg, ``On certain metric spaces arising from Euclidean spaces by a change of metric and their imbedding in Hilbert space'', Annals of Mathematics, vol.~38, pp.~787--793, 1937.

\bibitem{TusMST} A.\,A.~Tuzhilin, ``Calculation of Minimum Spanning Tree Edges Lengths using Gromov-Hausdorff Distance'', ArXiv e-prints, {\tt arXiv:1605.01566}, 2016.

\bibitem{Ury} P.~Urysohn, ``Sur un espace metrique universel'', Bull, de Sciences Mathematiques, Ser.~2, vol.~ 51, pp.~1--38, 1925.

\bibitem{West} D.\,B.~West,  \emph{Introduction to Graph Theory}, 2nd ed., Englewood Cliffs, NJ: Prentice-Hall, 2000.

\bibitem{Zieg} G.\,M.~Ziegler, ``Colouring Hamming Graphs, Optimal Binary Codes, and the $0/1$-Borsuk Problem in Low Dimensions'', in: {\em Computational Discrete Mathematics},  ed. by Helmut Alt (Springer--Verlag, Berlin Heidelberg New York, 2001), pp.~159--172.

\end{thebibliography}
\end{document}